\newcommand{\DD}{\mathbb{D}}
\newcommand{\EE}{\mathbb{E}}
\newcommand{\NN}{\mathbb{N}}
\newcommand{\PP}{\mathbb{P}}
\newcommand{\RR}{\mathbb{R}}
\newcommand{\CB}{\mathcal{B}}
\newcommand{\CC}{\mathcal{C}}
\newcommand{\CD}{\mathcal{D}}
\newcommand{\CF}{\mathcal{F}}
\newcommand{\CI}{\mathcal{I}}
\newcommand{\CK}{\mathcal{K}}
\newcommand{\CM}{\mathcal{M}}
\newcommand{\CZ}{\mathcal{Z}}
\newcommand{\1}{\mathbb{1}}
\newcommand{\eqn}[1]{\begin{equation} #1 \end{equation}}
\newcommand{\eqan}[1]{\begin{align} #1 \end{align}}
\newcommand{\lbeq}[1]{\label{#1}}
\newcommand{\nn}{\nonumber}
\begin{document}




\section{Introduction}


The main goal of the present article is to verify a compact containment condition for \textit{ ``Nonlinear historical superprocess approximations for population models with past dependence''} as stated in \cite[Lemma 3.5(i)]{MT2012-2} in a more general setup. As a by-product, two errors from \cite{MT2012-2} are fixed and a result on the paths of individuals is obtained in a broader context. The obtained result extends the results of \cite{MT2012-2} as far as compact containment of the approximating processes goes.

Compact containment is one of the two properties to establish tightness of a sequence of laws on $\DD_Y([0,T])$, where we denote by $\DD_Y([0,T])$ the space of c\`adl\`ag functions from $[0,T]$ to $Y$ embedded with the Skorohod topology, with $Y$ a given Polish space (cf. Jakubowski's criterion for tightness as stated in \cite[Theorem 3.6.4]{bD1993}). Compact containment means that for any $T>0$ and for any $\epsilon>0$ fixed, one can find a compact set in $Y$ such that the $n^{\mbox{th}}$-approximating population at time $t \in [0,T]$ is located outside this set with probability $\epsilon$ at most, uniformly in time $t \in [0,T]$ and $n \in \NN$. The result is stated in Theorem~\ref{THM:MT-extended} (here $Y = \CM_F(\DD_E)$). Consequently, compact containment results provide additionally some control on the paths of particles (cf. 
Lemma~\ref{LEM:app_to_mod_of_cont}).

In \cite{MT2012-2}, interacting population models are under consideration, where each individual is assigned a trait. The models involve branching, mutation and competition. Birth- and death-events happen at exponential rates. The rates depend on the trait of the individual and on the history of its trait through its ancestry. We therefore identify each individual rather by the history of its past traits up to the present time, that is we consider \textit{historical particles}. Competition is modeled by means of an additional term in the death-rate that takes into account the trait-history of the other individuals as well.

As a consequence, historical processes are particularly well-suited to record the evolution of the traits of individuals in a population over time. For each $n \in \NN$, an approximating population is given. It is then shown in \cite{MT2012-2} for large populations with small individual biomasses and under additional assumptions, that the diffusive limit for $n \rightarrow \infty$ converges to a limiting nonlinear historical superprocess limit. Existence of the limiting process is established by proving the tightness of the sequence of laws of the approximating populations.

One of the major strengths of historical processes is that they allow for a control of the traits of \textbf{historical} particles, present in the population at time $t$, uniformly in $t \in [0,T]$. That is, we obtain a control on the history of the trait of the particle through its ancestry as well. For instance, Lemma~\ref{LEM:app_to_mod_of_cont} yields a control on the modulus of continuity of the paths of the particles in the population.

The present article extends the result on compact containment from \cite{MT2012-2} from $\RR^d$ to Polish trait-spaces and from translation-invariant Gaussian mutation densities to a class of mutation kernels that allow for a dependence on the parent's trait as well (cf. Hypothesis~\ref{HYP:mut-kernel}). Additionally, a lower bound on the interactive killing rate is dropped (cf. \eqref{ass-on-U}). 

Historical particles can be modeled as c\`adl\`ag paths on the trait space $E$. Here it is important to recall that relative compactness in $\DD_E$ involves both controlling the range as well as the modulus of continuity of the traits of the particle along its path. To show compact containment of the sequence of approximating particle systems therefore involves not only controlling jump-sizes in trait space at times of birth but also controlling the impact of an accumulation of jumps in a period of time. 

As a result, the use of exponential rates in modeling birth- and death-events is a challenge compared to a setup with equidistant time-steps. Indeed, one of the main steps in proving compact containment is to obtain a bound on the expected fraction of historical particles at a fixed time $T$ outside a compact set $K \subset \DD_E$ (cf. Proposition~\ref{PRO:MT-extended} to come). In the equidistant case, this bound can be readily obtained by induction over each time-step respectively birth/death-event, see \cite[Lemma II.3.3(a)]{bP2002} and reduces to a bound on the evolution in trait-space of a single particle only. In the non-equidistant setup, the number of trait-changes (that is, birth with mutation) until time $T$ now plays a major role in the derivation of an appropriate bound. 

Coupling-techniques are an important tool in this article: For the populations in question, we construct couplings with ``dominating'' respectively ``minorizing'' populations in the sense that one population is a sub/super-population of the other one. This is done by choosing birth- and death-rates appropriately with the aim to loose certain dependencies in the rates on the paths of the particles. Also, for two paths with a different number of trait-changes until time $T$, the moduli of continuity are compared by means of coupling-techniques.
 
In \cite{KW2014} the results of the present paper will be applied in a context of evolving genealogies where the metric is the mutational distance, with mutations happening at birth (see also the remark at the end of Section~\ref{S:results}).

We next briefly introduce the model of \cite{MT2012-2} and its extensions in the present context. For a biological motivation and a discussion of literature the interested reader is referred back to \cite{MT2012-2}. We start with some basic notation taken in part from \cite{MT2012-2}.


\begin{notation}[cf. Notation of {\cite[Section 1]{MT2012-2}}]
For a given metric space $E$, we denote by $\CC(E), \CC_b(E)$ respectively $\CB(E)$ the continuous, bounded continuous respectively bounded functions on $E$. 

We further denote by $\DD_E = \DD(\RR_+,E)$ the space of c\`adl\`ag functions from $\RR_+$ to $E$ embedded with the Skorohod topology. For a function $x \in \DD_E$ and $t>0$, we denote by $x^t$ the stopped function defined by $x^t(s) = x(s \wedge t)$ and by $x^{t-}$ the function defined by $x^{t-}(s) = \lim_{r \uparrow t} x^r(s)$. We will also often write $x_t=x(t)$ for the value of the function at time $t$. For $y, w \in \DD_E$ and $t \in \RR_+$, we denote by $(y|t|w) \in \DD_E$ the following path:
\eqn{
\lbeq{concatenated-path}
  (y|t|w) = \begin{cases}
              y_u & \mbox{ if } u<t \cr
              w_{u-t} & \mbox{ if } u \geq t. \cr
            \end{cases} 
}
For a constant path $w$ with $w_u=x, \ \forall u \in \RR_+$, we will write $(y|t|x)$ with a slight abuse of notation.

Denote by $\CM_F(E)$ the set of finite measures on $E$ embedded with the topology of weak convergence.
\end{notation}


\section{The historical particle system}


We shortly introduce the population model from \cite{MT2012-2}. Where there is an extension made, we will remark on it. Note in particular that \cite{MT2012-2} prove existence and convergence to a nonlinear historical superprocess. As we only concern ourselves with proving a compact containment condition for the approximating populations, certain assumptions made in \cite{MT2012-2} become therefore unnecessary.


In the $n^{th}, n \in \NN$ approximation step, \cite{MT2012-2} consider a discrete population in continuous time where individuals reproduce asexually and die. Each individual is assigned a trait. The first extension is that
\eqn{ 
  \mbox{the trait space } E \mbox{ is assumed to be Polish } 
}
contrary to \cite[paragraph before (2.1)]{MT2012-2}, where $E$ is restricted to $E = \RR^d$.

The lineage or past history of an individual is defined as follows: To an individual of trait $x$ born at time $S_m$, having $m-1$ ancestors born at times $0=S_1 < S_2 < \cdots < S_{m-1},$ with $S_{m-1} < S_m$, and of traits $(x_1,x_2,\ldots,x_{m-1})$, we associate the path
\eqn{
\lbeq{lineage}
  y_t = \sum_{j=1}^{m-1} x_j \1_{S_j \leq t < S_{j+1}} + x \1_{S_m \leq t}. 
}
This path is called the \textit{lineage} of the individual. For $n \in \NN$, we consider an individual characterized by the lineage $y \in \DD_{\RR^d}$ in a population $X^n \in \DD_{\CM_F(\DD_E)}$:

The population at time $t$ is represented by a finite point measure
\eqn{
\lbeq{def-X}
  X_t^n = \frac{1}{n} \sum_{i=1}^{N_t^n} \delta_{y^i_. \wedge t} \in \CM_F(\DD_E), 
}
where $N_t^n = n \langle X_t^n , 1 \rangle$ is the number of individuals alive at time $t$. Note in particular that individuals are attributed the weight $1/n$ in this scaling.

\textbf{Initial conditions:} To ensure existence, uniqueness and compact containment of the approximating particle systems, assume
\eqn{ 
\lbeq{tightness-initial}
  \sup_{n \in \NN} \EE[ \langle X_0^n,1 \rangle^2 ] < \infty \quad \mbox{ and } \quad \mbox{ the sequence of laws of } (X_0^n)_{n \in \NN} \mbox{ is tight on } \CM_F(\DD_E). 
}
\textit{The initial conditions coincide with what is used in \cite{MT2012-2} in the parts of proofs that are relevant to our article (cf. \cite[Proposition 2.6 and Proposition 3.4]{MT2012-2}). The corresponding first part of the assumption can be found in \cite[(2.14)]{MT2012-2}. An exponent of $3$ instead of $2$ only becomes necessary in the context of applying a Girsanov-argument along the lines of the proof of \cite[Theorem 5.6]{FM2004}. Note in particular that this bound yields a uniform bound on the first and second moments of the overall mass over time, see Lemma~\ref{LEM:moment-bounds} below, which is not only a crucial ingredient in the proof of existence and uniqueness of the approximating systems but also important in verifying the compact containment condition as we deal with finite measures and not probability measures (see \eqref{def-X}). The second part of the above assumption is included in \cite[(3.5)]{MT2012-2}.}

Let us now recall the population dynamics. 

\textbf{Reproduction:} The birth rate at time $t$ is
\eqn{
  b^n(t,y) = n r(t,y) + b(t,y) 
}
with $r, b \in \CB(\RR_+ \times \DD_E)$ such that
\eqn{
\lbeq{ass-r}
  0 < \underline{R} \leq r(t,y) \leq \overline{R} \quad \mbox{ and } \quad 0 \leq b(t,y) \leq \overline{B}.
}
\textit{In \cite[(2.3)--(2.4)]{MT2012-2} it is additionally assumed that $b, r$ are continuous and that $r$ can be written in the explicit form \cite[(2.4)]{MT2012-2}. These assumptions are not used in the proofs of existence and uniqueness of the approximating processes $X^n$ and in the proof of compact containment and we therefore drop them in our statements.}

When an individual with trait $y_{t-}$ gives birth at time $t$, the new offspring is either a mutant or a clone:
\begin{itemize}
\item
With probability $1-p \in [0,1]$, the new individual is a clone of its parent, with same trait $y_{t-}$ and same lineage $y$.
\item
With probability $p \in [0,1]$, the offspring is a mutant of trait $h$, where $h$ is drawn according to the distribution $\alpha_n(y_{t-},h)$, where
\eqn{ 
  \alpha_n(x,dh), \quad x \in E, h \in E \backslash \{x\} 
}
is a stochastic kernel, the so-called \textit{mutation kernel} on $E$. To this mutant is associated the lineage $(y|t|h)$. 
\end{itemize}
\textit{In \cite[paragraph before (2.5)]{MT2012-2} it is assumed that $\alpha_n(x,x+dh)=k^n(h) dh$, that is the mutant has trait $y_{t-}+h$, where $h$ is drawn according to the distribution $k^n(h) dh$. For the sake of simplicity, the mutation density $k^n(h)$ is assumed to be a Gaussian density with mean $0$ and covariance $\sigma^2\textbf{Id}/n$.} 

Here we generalize from mutation densities to mutation kernels and allow for a dependence on the parent's trait as well. Note that \cite{MT2012-2} often speak of ``jump sizes'' to signify the change of trait at time $t$ from $y_{t-}$ to $y_t=y_{t-}+h$. In the present context we continue to use this wording to signify the change of trait at time $t$ from $y_{t-}$ to $y_t=h$.


\begin{hypothesis}[Assumption on the mutation kernel]
\label{HYP:mut-kernel}
Let $\alpha_n(x,dh), n \in \NN$ be a stochastic kernel on $E$. For $y_0 \in E$ fixed, let $Y^n \in \DD_E$ be a process that starts in $y_0$ and jumps according to the kernel $\alpha_n(x,dh)$ at rate $n$. Denote by $\PP_{y_0}^n$ its distribution starting from $y_0$. We now assume that 
\eqn{
\lbeq{hyp-mut-kernel}
  \mbox{the sequence of laws of } \Big( \int_{\DD_E} X_0^n(dy) \PP^n_{y_0} \Big)_{n \in \NN} \mbox{ is tight on } \DD_E,
}
where $X_0^n$ is the initial condition of the $n^{th}$-approximating population. 
%
%
%
%
\end{hypothesis}


In Lemma~\ref{LEM:suff-cond-mut-kernel} below we give sufficient conditions on the kernel to satisfy this hypothesis. One of the conditions includes the Gaussian setup from \cite{MT2012-2}.

\textbf{Death:} The death rate at time $t$ is
\eqn{
\lbeq{death-rate} 
  d^n(t,y,X^n) = n r(t,y) + D(t,y) + \int_0^t \int_{\DD_E} U(t,y,y') X^n_{t-s}(dy') \nu_d(ds) 
}
with $D \in \CB(\RR_+ \times \DD_E)$, an interaction kernel $U \in \CB(\RR_+ \times \DD^2_E)$ and a Radon measure $\nu_d$ that satisfy
\eqan{
\lbeq{ass-on-U}
  & \exists \overline{D} > 0, \forall y \in \DD_E, \forall t \in \RR_+, 0 \leq D(t,y) < \overline{D}, \nn\\
  & \exists \overline{U} > 0, \forall y, y' \in \DD_E, \forall t \in \RR_+, 0 \leq U(t,y,y') < \overline{U}. 
}
\textit{Once again we drop the continuity assumptions from \cite[(2.6)--(2.7)]{MT2012-2}. It is important to note that we weaken the second part of assumption \cite[(2.7)]{MT2012-2} on the interaction kernel $U$: \cite{MT2012-2} additionally assume $\exists \underline{U}>0:\ \forall y, y' \in \DD_E, \forall t \in \RR_+, \underline{U} < U(t,y,y')$.}

The proof of existence and uniqueness of the approximating particle systems is a direct adaptation of \cite[Sections 2,3 and 5]{FM2004}. 


\section{Results}\label{S:results}


In this section we provide results and short proofs, as well as an outlook at the end. We start with a uniform bound on the first and second moments of the overall mass over time, resulting from the assumptions made above.

%
\begin{lemma}
\label{LEM:moment-bounds}
For all $T>0$,
\eqn{
\lbeq{unif-moment-bound}
  \sup_{n \in \NN} \EE\big[ \sup_{t \in [0,T]} \langle X_t^n , 1 \rangle^2 \big] < \infty.
}
\end{lemma}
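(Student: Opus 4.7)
The plan is to write down the Doob--Meyer decomposition for the total-mass process $Z^n_t:=\langle X^n_t,1\rangle=N^n_t/n$ and then combine Doob's $L^2$-maximal inequality with Gronwall's lemma. Every birth adds $1/n$ to $Z^n$ at individual rate $b^n(s,y)=nr(s,y)+b(s,y)$ and every death subtracts $1/n$ at individual rate $d^n(s,y,X^n)=nr(s,y)+D(s,y)+\int_0^s\int U(s,y,y')X^n_{s-u}(dy')\nu_d(du)$. The crucial observation is that the $O(n)$ contributions $nr$ cancel identically between the birth and death compensators, so that the predictable drift of $Z^n$ reduces to
\eqn{
  \int_0^t\int_{\DD_E}\Bigl[b(s,y)-D(s,y)-\int_0^s\int U(s,y,y')X^n_{s-u}(dy')\nu_d(du)\Bigr]X^n_s(dy)\,ds,
}
which by \eqref{ass-r}, \eqref{ass-on-U} and the nonnegativity of $D$ and $U$ is bounded above by $\overline{B}\int_0^t Z^n_s\,ds$. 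This yields the pathwise estimate $Z^n_t\le Z^n_0+\overline{B}\int_0^t Z^n_s\,ds+M^n_t$ for a purely-discontinuous local martingale $M^n$.

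Each jump of $M^n$ has size $1/n$, so after summing birth and death intensities over the $nZ^n_s$ particles alive at time $s$ and using that $\nu_d([0,T])<\infty$ (since $\nu_d$ is Radon), one obtains
\eqn{
  \langle M^n\rangle_t \le C_1\int_0^t Z^n_s\,ds+\frac{C_2}{n}\int_0^t \bigl(\sup_{u\le s}Z^n_u\bigr)^2\,ds
}
with constants $C_1,C_2$ depending only on $\overline{R},\overline{B},\overline{D},\overline{U}$ and $\nu_d([0,T])$. Applying Doob's $L^2$-inequality to $M^n$, squaring the pathwise bound on $Z^n$, and using $x\le 1+x^2$ to absorb the linear term then delivers, with a $T$-dependent but $n$-independent constant $C_T$,
\eqn{
  \EE\bigl[\sup_{s\le t}(Z^n_s)^2\bigr]\le 3\,\EE[(Z^n_0)^2]+C_T+C_T\int_0^t \EE\bigl[\sup_{u\le s}(Z^n_u)^2\bigr]\,ds.
}
Gronwall's lemma together with the uniform initial second-moment bound from \eqref{tightness-initial} then gives \eqref{unif-moment-bound}.

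The only technical nuisance is that Doob's inequality and Gronwall cannot be invoked before $\EE[\sup_{s\le t}(Z^n_s)^2]$ is known to be finite. I would handle this by localizing at the stopping times $\tau^n_K:=\inf\{t:N^n_t\ge K\}$: running the argument above on $[0,t\wedge\tau^n_K]$ gives an $(n,K)$-uniform bound, and one then lets $K\to\infty$ via Fatou together with non-explosion of $N^n$ (which itself follows from applying the drift bound to $\EE[Z^n_t]$ alone). This localization is the main piece of technical care, but it is entirely routine; no ingredient specific to the Polish trait space or to the extended mutation kernel is required, consistent with the lemma being a direct consequence of \eqref{tightness-initial}, \eqref{ass-r} and \eqref{ass-on-U} alone.
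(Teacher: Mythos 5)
Your argument is correct and is essentially the proof the paper points to: the paper's own proof is the single sentence that the claim is a direct adaptation of the proof of \cite[Theorem 5.6]{FM2004}, which runs exactly along your lines (cancellation of the $O(n)$ birth/death terms in the compensator, Doob's $L^2$-inequality applied to the jump martingale, Gronwall, and localization to justify finiteness). The only step worth keeping explicit is the one you implicitly use --- that squaring the one-sided bound on $Z^n_t$ is legitimate because $Z^n_t \ge 0$ --- and, as you note, the second-moment assumption \eqref{tightness-initial} together with \eqref{ass-r} and \eqref{ass-on-U} is all that is needed.
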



\begin{proof} The proof is a direct adaptation of the proof of \cite[Theorem 5.6]{FM2004}. \end{proof}


We continue by providing two sufficient conditions on the mutation kernel to satisfy Hypothesis~\ref{HYP:mut-kernel}. The conditions are inspired by \cite[Assumption 2.3]{MT2012-1}.
%

\begin{lemma}
\label{LEM:suff-cond-mut-kernel}
Suppose assumptions \eqref{tightness-initial} on the initial conditions $(X_0^n)_{n \in \NN}$ hold. Either condition on the mutation kernel $\alpha_n(x,dh)$ to follow is then sufficient to satisfy Hypothesis~\ref{HYP:mut-kernel} on $\alpha_n(x,dh)$. 

Let
\eqn{ 
  A^n f(x) := n \int_{E \backslash \{x\}} (f(h)-f(x)) \alpha_n(x,dh). 
}
\begin{enumerate}
\item[(1)]
$E$ is compact and there exists a generator $A$ of a Feller semi-group on $\CC_b(E)$ with domain $\CD(A)$ dense in $\CC_b(E)$ such that
\eqn{
\lbeq{lemma-mut-kernel-1}
  \forall f \in \CD(A), \quad \lim_{n \rightarrow \infty} \sup_{x \in E} \Big| A^n f(x) - A f(x) \Big| = 0.
}
\item[(2)]
$E$ is a closed subset of $\RR^d$ and there exists a generator $A$ of a Feller semi-group on $\CC_b(E)$ with domain $\CD(A)$ dense in $\CC_b(E)$ such that there exists $l_1 \geq l_0 \geq 2$ with $\CC_b^{l_1}(E) \subset \CD(A)$ and such that $\forall f \in \CC_b^{l_1}(E), \forall x \in E$,
\eqn{ 
  | Af(x) | \leq C \sum_{|k| \leq l_0 \atop k=(k_1,\ldots,k_d)} | D^kf(x) | 
}
and
\eqn{
\lbeq{lemma-mut-kernel-2}
  \sup_{x \in E} \Big| A^n f(x) - Af(x) \Big| 
  \leq \epsilon_n \sum_{|k| \leq l_1 \atop k=(k_1,\ldots,k_d)} \| D^kf \|_\infty, 
}
where $D^kf(x) = \partial_{x_1}^{k_1} \cdots \partial_{x_d}^{k_d} f(x)$, $\epsilon_n$ is a sequence tending to $0$ as $n$ tends to infinity and $C$ is a constant.
\end{enumerate}
\end{lemma}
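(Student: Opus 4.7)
The plan is to identify $Y^n$ as a pure-jump Markov process on $E$ whose bounded generator on $\CC_b(E)$ is exactly $A^n$, and then to invoke a standard generator-convergence theorem (e.g.\ \cite[Theorems 1.6.1 and 4.2.5]{EK1986}) to upgrade the assumed convergence $A^n \to A$ on a core to weak convergence of $Y^n$ in $\DD_E$. Since Hypothesis~\ref{HYP:mut-kernel} only asks for tightness of the mixed measure $\int X_0^n(dy)\PP^n_{y_0}$, this convergence, combined with the Feller property of the limit and with the tightness assumption~\eqref{tightness-initial}, should suffice.

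I would proceed in three steps. First, for each fixed $y_0 \in E$, I would deduce weak convergence in $\DD_E$ of $\PP^n_{y_0}$ to the Feller law $\PP^\infty_{y_0}$ generated by $A$. In part (1) this is immediate from~\eqref{lemma-mut-kernel-1} and the fact that $\CD(A)$ is a core. In part (2) I would first check that $\CC_b^{l_1}(E)\subset\CD(A)$ is dense in $\CC_b(E)$ and therefore a core for $A$; then~\eqref{lemma-mut-kernel-2} gives $A^n f \to Af$ uniformly on $E$ for every $f \in \CC_b^{l_1}(E)$, and the Ethier--Kurtz machinery applies as in part (1). Second, I would upgrade to uniform tightness of $\{\PP^n_{y_0}: n \in \NN,\, y_0 \in L\}$ over $y_0$ ranging in any compact $L \subset E$; this uses the Feller property of the limit together with the joint-convergence statement of \cite[Theorem 4.2.5]{EK1986} applied to initial conditions $y_0^{(n)} \to y_0$.

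Third, I would use~\eqref{tightness-initial} to transfer this to the mixed measure. Writing $\nu_n := \int_{\DD_E} X_0^n(dy)\,\PP^n_{y(0)} \in \CM_F(\DD_E)$, I would fix $\epsilon > 0$, use~\eqref{tightness-initial} to pick a compact $\CK \subset \CM_F(\DD_E)$ with $\PP(X_0^n \notin \CK) < \epsilon$ for all $n$, and extract from $\CK$ a compact $L \subset E$ capturing, uniformly in $n$, all but an $\epsilon$-fraction (in expectation) of the initial positions $y(0)$. Combining with step two yields a compact $K \subset \DD_E$ such that
\[
  \sup_n \EE\big[\nu_n(\DD_E \setminus K)\big] \le C\,\epsilon,
\]
with $C$ depending only on $\sup_n \EE[\langle X_0^n,1\rangle]$, which is finite by Lemma~\ref{LEM:moment-bounds}. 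This is the desired tightness.

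I expect the main obstacle to be step two: the passage from convergence for each fixed starting point to uniform tightness of the family $\{\PP^n_{y_0}\}$ over $y_0$ in a compact set. In part (1) the compactness of $E$ makes the uniformity in $y_0$ essentially automatic, which is why the condition \eqref{lemma-mut-kernel-1} is stated with a uniform sup. In part (2), the norm bound~\eqref{lemma-mut-kernel-2} is precisely what is needed so that the Ethier--Kurtz convergence is uniform in $x$, and one must combine this with Feller continuity of the limit and the local-compactness structure of the closed subset $E\subset\RR^d$ to conclude.
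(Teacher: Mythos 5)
Your strategy is genuinely different from the paper's: you aim for full weak convergence $\PP^n_{y_0}\Rightarrow\PP^\infty_{y_0}$ in $\DD_E$ via the Trotter--Kato/Ethier--Kurtz generator-convergence machinery and then deduce tightness, whereas the paper proves only tightness, via Jakubowski's criterion: relative compactness of the real-valued projections $f\circ Y^n$ for $f$ in a dense/separating family (from \cite[Theorem III.9.4]{bEK2005}, which uses only the martingale property of $f(Y^n_t)-\int_0^t A^nf(Y^n_s)\,ds$ and the uniform bound $\sup_n\|A^nf\|_\infty<\infty$ supplied by \eqref{lemma-mut-kernel-1} resp.\ \eqref{lemma-mut-kernel-2}), plus a compact containment condition for $Y^n$ (trivial in case (1) by compactness of $E$; obtained in case (2) by adapting \cite[Proof of Lemma 3.3]{MT2012-1}). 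The convergence route is stronger than what is needed, and that is exactly where your argument has a gap.

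The gap is in case (2), where you assert that $\CC_b^{l_1}(E)\subset\CD(A)$, being dense in $\CC_b(E)$, ``is therefore a core for $A$''. That inference is false in general: a subspace of the domain that is dense in the Banach space need not be a core --- one needs something extra, e.g.\ invariance under the semigroup or density of $(\lambda-A)\CC_b^{l_1}(E)$, neither of which is among the hypotheses of the lemma. Without the core property, generator convergence on $\CC_b^{l_1}(E)$ (which is all that \eqref{lemma-mut-kernel-2} gives you) does not imply semigroup convergence, so your Step 1 is not justified; Step 2 then also fails, since both the identification of the Feller limit and the compact containment over $y_0$ in a compact $L$ were to be inherited from that convergence. (A smaller issue in Step 2: tightness of the two-parameter family $\{\PP^n_{y_0}:n,\,y_0\in L\}$ also requires handling each fixed $n$, where no continuity of $y_0\mapsto\PP^n_{y_0}$ is assumed; this is harmless for the application because for each fixed $n$ the mixed measure is a single finite measure on a Polish space, hence tight, but it should be said.) The repair is precisely the paper's weakening: do not identify a limit; use the uniform generator bounds to get relative compactness of $\{f\circ Y^n\}$ for $f$ in a dense family, and prove compact containment separately by applying $A^n$ to truncations of $|x|^2$ as in \cite{MT2012-1} --- for this, density of $\CC_b^{l_1}(E)$ suffices and no core is needed. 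Your case (1) is fine as written ($\CD(A)$ is trivially a core and $E$ is compact), and your Step 3, transferring tightness from $\{\PP^n_{y_0}:y_0\in L\}$ to the mixed measures via \eqref{tightness-initial} and the moment bound, is sound and is essentially how Hypothesis~\ref{HYP:mut-kernel} is used in the paper.
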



\begin{proof}
Suppose assumptions \eqref{tightness-initial} on the initial conditions $(X_0^n)_{n \in \NN}$ hold. To establish \eqref{hyp-mut-kernel} recall Jakubowski's criterion for tightness (see \cite[Theorem 3.6.4]{bD1993}) to see that it suffices to show 

(i) that the sequence of laws of $f \circ Y^n$ is tight on $\DD_\RR$ for all $f \in H$, where $H \subset \CC(E)$ separates points in $E$ and is closed under addition and

(ii) a compact containment condition holds, that is for all $T>0$ and $\eta>0$ there exists $\Gamma \subset E$ compact such that
\eqn{ 
  \inf_n \PP(Y^n(t) \in \Gamma \mbox{ for all } 0 \leq t \leq T) \geq 1-\eta. 
}

In case (1) respectively (2), (i) follows from \cite[Theorem III.9.4]{bEK2005} and \eqref{lemma-mut-kernel-1} respectively \eqref{lemma-mut-kernel-2}. In case (1), (ii) follows by compactness of $E$. In case (2), (ii) follows by adapting the reasoning from \cite[Proof of Lemma 3.3]{MT2012-1}. 
\end{proof}


\begin{remark}
The second case covers the setup of \cite{MT2012-2} where the mutation kernel is assumed to be translation invariant and of the form $k^n(h) dh$ with mutation density $k^n(h)$ a Gaussian density with mean $0$ and covariance matrix $\sigma^2$\textbf{Id}$/n$ (cf. \cite[paragraph following (2.4)]{MT2012-2}). 
\end{remark}


The main result of this article is that a compact containment condition holds for the sequence of approximating populations $(X^n)_{n \in \NN}$:


\begin{theorem}
\label{THM:MT-extended}
For all $T, \epsilon>0$ there exists $\CK \subset \CM_F(\DD_E)$ relatively compact, such that
\eqn{ 
  \sup_{n \in \NN} \PP( \exists t \in [0,T], X_t^n \not\in \CK) \leq \epsilon. 
}
\end{theorem}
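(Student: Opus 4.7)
A subset of $\CM_F(\DD_E)$ (weak topology) is relatively compact iff its total masses are uniformly bounded and the measures are uniformly concentrated on compact subsets of $\DD_E$. I would therefore take
\[
\CK := \bigl\{\mu \in \CM_F(\DD_E) : \mu(\DD_E) \leq M,\ \mu(K_l^c) \leq 2^{-l}\ \forall\, l \geq 1\bigr\}
\]
for a constant $M$ and a sequence of compact sets $(K_l)_{l \geq 1} \subset \DD_E$ to be chosen. A union bound reduces the theorem to (i) $\sup_n \PP\big(\sup_{t \leq T}\langle X_t^n,1\rangle > M\big) < \epsilon/2$, which is immediate from Lemma~\ref{LEM:moment-bounds} and Markov's inequality, and (ii) for every $\eta, \delta > 0$ there exists a compact $K \subset \DD_E$ with $\sup_n \PP\big(\sup_{t \leq T} X_t^n(K^c) > \eta\big) < \delta$.

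The key ingredient for (ii) is a monotonicity of lineages: by the construction \eqref{lineage}, an individual's trait equals its own value $x$ for all $s \geq S_m$, where $S_m$ is its birth time. Choosing $K \subset \DD_E$ to consist of paths with range in a compact $K_E \subset E$ and Skorohod modulus of continuity bounded by a prescribed $\omega$---properties stable under stopping and constant extension---membership $y^i \in K$ is determined at the birth of individual $i$ and persists over its lifetime. Moreover every offspring (clone or mutant) of a bad individual is again bad, since the new lineage extends the parent's bad one. Writing $F_t^n := n X_t^n(K^c)$, the process $F^n$ is thus an integer-valued jump process with $+1$-jumps at bad births---at rate $b^n(t,y)$ per bad parent and $p\, b^n(t,y)\, \PP_{h \sim \alpha_n(y_{t-},\cdot)}\{(y|t|h) \notin K\}$ per good parent---and $-1$-jumps at bad deaths at rate $d^n$. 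The leading $nr$ contributions cancel in $b^n - d^n = b - D - \int U\,dX^n \leq \overline{B}$, so a Gronwall estimate combined with a Doob $L^2$-maximal inequality for the martingale part of $F^n$ yields
\[
\EE\Big[\sup_{t \leq T} F_t^n\Big]\big/n \;\leq\; C(T)\sqrt{\EE[F_0^n]/n + \EE\Big[{\textstyle\int_0^T} A_s^n\,ds\Big]\big/n},
\]
with $A^n$ the good-to-bad creation rate. The tightness of $(X_0^n)$ in \eqref{tightness-initial} makes $\EE[F_0^n]/n$ arbitrarily small by enlarging $K$ at time zero, and Hypothesis~\ref{HYP:mut-kernel}, applied to the tagged single-particle process jumping at rate $n$, furnishes per-jump exit probabilities of order $o(1/n)$, controlling $\EE\big[\int A_s^n\,ds\big]/n$.

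The principal obstacle is this last step: quantitatively translating the pathwise tightness in $\DD_E$ from Hypothesis~\ref{HYP:mut-kernel} into the per-jump estimate $\sup_{y \in K} \PP_{h \sim \alpha_n(y_{t-},\cdot)}\{(y|t|h) \notin K\} = o(1/n)$. Since the single-particle process has $\sim nT$ jumps in $[0,T]$, path-tightness does force each jump's exit probability to decay faster than $1/n$; however, obtaining this uniformly over the random parent lineage $y_{t-}$---within a Polish trait space and with a parent-dependent mutation kernel $\alpha_n$---requires a careful coupling of the tagged ancestral lineage with a process of the kind in Hypothesis~\ref{HYP:mut-kernel}, exactly the ``coupling with dominating/minorizing populations'' alluded to in the introduction.
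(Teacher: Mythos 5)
Your opening reduction is sound and matches the paper's: relative compactness in $\CM_F(\DD_E)$ is split into a uniform total-mass bound (Lemma~\ref{LEM:moment-bounds} plus Markov) and a uniform-in-time concentration of $X_t^n$ on a compact $K\subset\DD_E$ (the paper's Proposition~\ref{PRO:MT-extended}, up to the technical replacement of $K$ by the stopped/left-limit closure $K_T$, which you correctly anticipate by asking for stability under stopping). The heredity of ``badness'' along lineages is also a genuine ingredient of the paper's argument. The gap is in the quantitative core. Your scheme hinges on the per-jump estimate $\sup_{s\le T}\sup_{y\;\mathrm{good}}\PP_{h\sim\alpha_n(y_{s-},\cdot)}\big((y|s|h)\notin K\big)=o(1/n)$, and this uniform statement is simply false: take $E=\RR$, the Gaussian kernel with variance $\sigma^2/n$, and $K$ built from $\Gamma_T=[-L,L]$; a good parent with current trait $y_{s-}=L$ produces a mutant outside $\Gamma_T$ with conditional probability at least $p/2$, not $o(1/n)$. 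More generally, whether $(y|s|h)$ stays in $K$ is a global property of the path (range \emph{and} accumulated oscillation, e.g.\ two jumps within time $\delta$ of each other already threaten the $w'$-bound), so it cannot be certified jump by jump uniformly over good parents. Hypothesis~\ref{HYP:mut-kernel} deliberately provides only tightness of the \emph{law of the whole ancestral path}, and no coupling of the kind mentioned in the introduction converts that into a pointwise per-jump rate. You flag this yourself as the ``principal obstacle,'' but it is not a technicality to be absorbed by a coupling; it is the step where the argument breaks.

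The paper circumvents exactly this. It never bounds $\EE[\sup_{t\le T}X_t^n(K_T^c)]$ directly. Instead it introduces the stopping time $S_\epsilon^n=\inf\{t:X_t^n(K_T^c)>\epsilon\}$ and shows (Lemma~\ref{LEM:steps-3-5}, via a minorizing coupling and a Feller-diffusion survival estimate) that on $\{S_\epsilon^n<T\}$ the bad sub-population still carries mass $>\epsilon/2$ at the deterministic time $T$ with probability at least $\eta>0$; this converts the uniform-in-time event into the fixed-time expectation $\EE[X_T^n((K^T)^c)]$. That expectation is then bounded in Proposition~\ref{PRO:MT-Step-6} by a Yule-tree/many-to-one computation: one conditions on the number of jumps $N_T^{n,\alpha}$ of a single tagged ancestral line, bounds the survival probability of generation $k$ by $(\tfrac12(1+c/n))^{k}$, controls the tail of the jump count by a Poisson estimate, and only then applies the path-level tightness of Hypothesis~\ref{HYP:mut-kernel} to the event $\{(Y^n)^T\notin K^T\}$ for the \emph{entire} tagged path --- no per-jump rate is ever needed. (Note also that the many-to-one step requires care because the pruning probability depends on the path; the paper points out that this was mishandled in \cite{MT2012-2}.) If you want to salvage your Doob/Gronwall route, you would still have to prove the fixed-time bound $\sup_{s\le T}\EE[X_s^n(K^c)]$ small by some version of this tagged-path argument, at which point you have essentially reproduced Proposition~\ref{PRO:MT-Step-6}; the maximal inequality would then be an alternative to the paper's stopping-time step, but it cannot replace the tagged-path analysis.
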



\begin{proof}
By reasoning as in the \textit{Sketch of proof of \cite[Lemma 3.5]{MT2012-2}} it is enough to show the statement of Proposition~\ref{PRO:MT-extended} below, corresponding in spirit to item (i) of \cite[Lemma 3.5]{MT2012-2}. Note that due to the use of finite measures $X_t^n \in \CM_F(\DD_E)$ instead of probability measures, one needs to control the total mass, too, and this is clear from \eqref{unif-moment-bound}. \end{proof}
%

\begin{proposition}
\label{PRO:MT-extended}
For all $T, \epsilon>0$ there exists $K \subset \DD_E$ compact such that if
\eqn{
  K_T = \{ y^t, y^{t-} | y \in K, t \in [0,T] \} \subset \DD_E,
}
then
\eqn{
\lbeq{statement-MT-i} 
  \sup_{n \in \NN} \PP( \exists t \in [0,T], X_t^n(K_T^c) > \epsilon) \leq \epsilon.
}
\end{proposition}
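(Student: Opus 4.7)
The plan is to combine four ingredients: (i) an absorbing property of $K_T^c$ under the particle dynamics; (ii) a coupling with a dominating branching process $\bar X^n$ in which the interactive death term is removed; (iii) a spine/many-to-one reduction of the integrated injection flux into $K_T^c$ to a single-particle event; and (iv) Hypothesis~\ref{HYP:mut-kernel} together with Doob's maximal inequality to handle the supremum in $t$.

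First I take the compact $K \subset \DD_E$ to consist of paths constant on $[T,\infty)$, so that $K \subseteq K_T$. For a particle $i$ alive after its birth time $\tau_i^b$, the lineage $\eta_i$ is constant on $[\tau_i^b, \infty)$, hence $\eta_i^t = \eta_i^{\tau_i^b}$ for all $t \ge \tau_i^b$, and $\1[\eta_i^t \in K_T^c]$ is determined at birth. A direct computation using that $\alpha_n(x,\cdot)$ is supported on $E\setminus\{x\}$ shows that if the parent's lineage lies in $K_T^c$ then the offspring's lineage (clone or mutant) does as well, so ``bad'' lineages propagate only to descendants of bad particles and new bad particles arise only via ``good-to-bad'' mutations. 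I then couple $X^n$ with the dominating process $\bar X^n$ obtained by setting $\int U X$ to zero, so that $X_t^n(A) \le \bar X_t^n(A)$ for every measurable $A$ and $t \le T$. Since the net growth rate of $\bar X^n$ is $b-D \le \overline{B}$, Gronwall applied to $\EE[\bar X_t^n(1)]$ together with~\eqref{tightness-initial} yields $\sup_n\sup_{t \le T} \EE[\bar X_t^n(1)] \le C_T$. Writing $B_t^n := \bar X_t^n(K_T^c)$, the absorbing property gives the semimartingale decomposition
\[
B_t^n = \int_0^t I_s^n\,ds + \int_0^t (b-D)(s,\cdot)\,B_s^n\,ds + M_t^n,
\]
where $I_s^n$ is the rate at which good parents produce bad mutants and $M^n$ is a martingale with jumps of size $1/n$.

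Next I apply a many-to-one formula for the historical branching process $\bar X^n$ to obtain
\[
\EE\Bigl[\int_0^T I_s^n\,ds\Bigr] \le C'_T\,\EE_{X_0^n}\bigl[\PP_{y_0}(\tilde Y^n_{\cdot \wedge T} \in K_T^c)\bigr],
\]
where $\tilde Y^n$ is a single-particle spine lineage process starting from the $X_0^n$-weighted distribution and jumping according to $\alpha_n$ at rate at most $p(n\overline{R} + \overline{B}) \le C'n$. The key point is that, by the absorbing property, the spine's lineage can transition from $K_T$ to $K_T^c$ at most once, so this integrated probability collapses to the single event $\{\tilde Y^n_{\cdot \wedge T} \in K_T^c\}$. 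A thinning/time-change comparison with the reference process $Y^n$ of Hypothesis~\ref{HYP:mut-kernel} will show that $\{\tilde Y^n_{\cdot \wedge T}\}_n$ is tight on $\DD_E$; hence for any $\eta > 0$ one can choose $K$ with $\sup_n \PP(\tilde Y^n_{\cdot \wedge T} \in K_T^c) \le \eta$. Doob's $L^2$-inequality applied to $M^n$ together with Gronwall on the drift of $B^n$ then yields $\EE[\sup_{t \le T} B_t^n] \le c_1 \eta + c_2/\sqrt{n}$, and Markov's inequality with $\eta = \epsilon^2/(2c_1)$ gives the claim (enlarging $K$ to accommodate finitely many small $n$).

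The hard part will be the spine/many-to-one bound on the integrated injection flux. A naive bound using the total number of birth events scales as $O(n)$ per initial particle and would fail; the argument's success hinges on exploiting the absorbing property to reduce the spine's expected number of ``bad transitions'' to the single-event probability $\PP(\tilde Y^n_{\cdot \wedge T} \in K_T^c)$, which is controlled by Hypothesis~\ref{HYP:mut-kernel}. The supremum in $t$ is then routine via Doob, but requires careful bookkeeping of the quadratic variation of $M^n$ through the jump sizes $1/n$.
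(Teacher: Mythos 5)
Your route is genuinely different from the paper's. The paper introduces the stopping time $S_\epsilon^n=\inf\{t:X_t^n(K_T^c)>\epsilon\}$ and splits $\{S_\epsilon^n<T\}$ according to whether $X_T^n((K^T)^c)>\epsilon/2$; the supremum over $t$ is then handled by a \emph{conditional survival} estimate (Lemma~\ref{LEM:steps-3-5}): given $\CF_{S_\epsilon^n}$, a minorizing critical branching process started from the bad mass at time $S_\epsilon^n$ keeps mass $>\epsilon/2$ until $T$ with probability $\geq\eta$, with $\eta$ independent of $K$ (this is where the diffusion limit, the exponential martingale and the small constant $D_0$ enter). Everything then reduces to the fixed-time bound $\EE[X_T^n((K^T)^c)]<\epsilon^2\eta/2$ of Proposition~\ref{PRO:MT-Step-6}. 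You instead bound $\EE[\sup_{t\leq T}\bar X_t^n(K_T^c)]$ directly via a semimartingale decomposition, Gronwall and Doob, using the absorbing property of $K_T^c$ (which is correct, and is also the property underlying the paper's survival argument) to collapse the injection drift to a single-spine probability. If the many-to-one step is carried out, your approach dispenses with the survival estimate entirely, which is an attractive simplification; what it costs is that the many-to-one must be applied to a functional indexed by \emph{birth events} (an $O(n^2)$-cardinality family) rather than by particles alive at $T$, and the $n$ in the mutation rate is cancelled only by the ``at most one good-to-bad transition per lineage'' observation — you correctly identify this as the crux.

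Two concrete points need repair. First, the quadratic variation of $M^n$ is not $O(1/n)$: the bad subpopulation of size $nB_s^n$ undergoes birth/death events at total rate of order $n^2 B_s^n$, each of size $1/n$, so $\EE[\langle M^n\rangle_T]$ is of order $\int_0^T\EE[B_s^n]\,ds$, i.e.\ of order $\eta$, and Doob gives $\EE[\sup_t|M_t^n|]\leq c\sqrt{\eta}$ rather than $c_2/\sqrt{n}$. The conclusion survives ($\EE[\sup_t B_t^n]\leq c_1\eta+c_2\sqrt{\eta}$, then choose $\eta$ of order $\epsilon^4$), but as stated the bound and the choice $\eta=\epsilon^2/(2c_1)$ are wrong, and no enlargement of $K$ for small $n$ is needed or available from this term. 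Second, the many-to-one formula you invoke is exactly the step at which, as remarked after \eqref{control-time-dist-MT}, the original argument of \cite{MT2012-2} contains an error: the birth/death rates (hence the spine change of measure and the pruning probabilities) depend on the whole path $Y^{n,\alpha}$ and are not independent of it. To make your spine bound rigorous you need the same domination devices as in Section~\ref{SECTION:proof-step-6} — replace $r(t,y)$ by $\underline{R}$ or $\overline{R}$ as appropriate, and compare the resulting spine (jump rate of order $n$ with kernel $\tfrac12\delta_{y_{t-}}+\tfrac12K^n(y_{t-},\cdot)$, not $\alpha_n$ at rate $n$) to the reference process of Hypothesis~\ref{HYP:mut-kernel} by the thinning/time-change argument at the end of that section. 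You gesture at both of these but neither is yet done, and the second is precisely the place where a careless execution fails.
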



The proof of Proposition~\ref{PRO:MT-extended} follows below.


\begin{remark}[Assume without loss of generality $D \equiv 0$ and $U \equiv 0$ in \eqref{death-rate}]
\lbeq{rmk-assume-death-zero}
Recall the weakening of the assumption on the interaction kernel (see \eqref{ass-on-U} and the paragraph following it). When we decrease the death-rate we can introduce a coupling of the original historical process with a historical process with $D \equiv 0, U \equiv 0$ in such a way that the population of the former is a sub-population of the latter, uniformly over time. In what follows we will loosely call such a coupling ``dominating'' and a coupling where the coupled process yields sub-populations of the populations of the original process ``minorizing''. Once we prove \eqref{statement-MT-i} for the case $D \equiv 0$ and $U \equiv 0$ we therefore obtain \eqref{statement-MT-i} for $D, U$ satisfying \eqref{ass-on-U}. Note in particular that the scaling by $1/n$ in \eqref{def-X} is crucial for such a conclusion. 
\end{remark}


One of the main steps to prove Proposition~\ref{PRO:MT-extended} is to establish the following result. The generalization to Polish spaces and more general mutation operators is the main challenge in comparison to \cite{MT2012-2}. The proof of Proposition~\ref{PRO:MT-extended} can be found in Section~\ref{SECTION:proof-main-prop}. The proof of Proposition~\ref{PRO:MT-Step-6} is postponed to Section~\ref{SECTION:proof-step-6}.

Denote by $K^T := \{ y^T | y \in K \} \subset \DD_E$ the set of the paths of $K$ stopped at time $T$.


\begin{proposition}
\label{PRO:MT-Step-6}
For all $T, \epsilon>0$ there exists $K \subset \DD_E$ compact, such that
\eqn{
\lbeq{pro-MT-Step-6}
  \sup_{n \in \NN} \EE[ X_T^n((K^T)^c)] < \epsilon. 
}
\end{proposition}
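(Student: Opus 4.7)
The plan is to reduce $\EE[X_T^n((K^T)^c)]$ to a first-moment/many-to-one computation and then invoke Hypothesis~\ref{HYP:mut-kernel}. By the Remark following Proposition~\ref{PRO:MT-extended}, one may work under the simplification $D\equiv0$ and $U\equiv0$: the birth rate per particle is then $nr(t,y)+b(t,y)\leq n\overline{R}+\overline{B}$ and the death rate is $nr(t,y)$, so the net growth rate is bounded by $\overline{B}$ and the mean number of descendants at time $T$ of any initial particle is at most $e^{\overline{B}T}$.

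The second step is a first-moment/many-to-one decomposition yielding a bound of the form
$$\EE[X_T^n(A)]\leq e^{\overline{B}T}\,\EE\!\left[\int_{\DD_E}\PP^{\xi^n}_{y(0)}\bigl(\xi^n_{\cdot\wedge T}\in A\bigr)\,X_0^n(dy)\right],$$
where $\xi^n$ is a single ``spine'' lineage starting at $y(0)$ with mutation-only dynamics: jumps drawn according to $\alpha_n(\cdot,dh)$ at a rate bounded above by $\overline{\lambda}_n:=p(n\overline{R}+\overline{B})$. The $nr$ parts of birth and death cancel into the size-biased spine choice, and the $O(1)$ residual $b(t,y)\leq\overline{B}$ produces the Feynman--Kac exponential factor; bounding the time-varying jump rate by the constant $\overline{\lambda}_n$ via a standard thinning coupling yields the displayed inequality.

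With such a first-moment bound in hand, the third step is to transfer the tightness of $Y^n$ from Hypothesis~\ref{HYP:mut-kernel} (jump rate exactly $n$) to $\xi^n$ (jump rate $\leq\overline{\lambda}_n$, comparable to $n$ since $\overline{\lambda}_n/n\to p\overline{R}$). This can be achieved either by a time change identifying $\xi^n$ restricted to $[0,T]$ with a sub-process of $Y^n$ on an enlarged interval, or via the generator-convergence reasoning behind Lemma~\ref{LEM:suff-cond-mut-kernel}; in either case one obtains tightness on $\DD_E$ of the sequence of laws of $\int X_0^n(dy)\PP^{\xi^n}_{y(0)}$. Given $\varepsilon>0$ one then picks $K\subset\DD_E$ compact with $\sup_n\PP(\xi^n_{\cdot\wedge T}\notin K^T)<\varepsilon\,e^{-\overline{B}T}/(1+\sup_n\EE[X_0^n(1)])$---the denominator being finite by \eqref{tightness-initial} and Lemma~\ref{LEM:moment-bounds}---and combines with the first-moment bound to obtain \eqref{pro-MT-Step-6}.

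The main obstacle is making the many-to-one step of the second paragraph rigorous. In contrast to classical branching diffusions, traits here do not evolve continuously: they change only at birth events that produce a mutant. The spine $\xi^n$ therefore arises from size-biasing the offspring choice at each branching event, and its instantaneous jump rate has a subtle dependence on the tree structure around the chosen lineage. Care is needed to certify that this effective rate is still dominated by $p(nr+b)$ and that the Feynman--Kac weight is bounded by $e^{\overline{B}T}$ uniformly along the trajectory, so that the bound can be fed directly into Hypothesis~\ref{HYP:mut-kernel}. A secondary technical point is that \eqref{hyp-mut-kernel} is stated only for the rate-$n$ process $Y^n$; extending the tightness to a family of jump processes whose rate merely scales as $cn$ is routine via time change, but requires some bookkeeping given the Polish setting and the need for uniformity in $n$.
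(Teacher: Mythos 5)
Your proposal is essentially correct, but it follows a genuinely different route from the paper's. The paper constructs the underlying Yule tree explicitly (branching at rate $b^n+d^n=2nr+\overline{B}$ with an independent pruning at each node), bounds the conditional survival probability of a generation-$k$ particle by $\1_{\{N_T^{n,\alpha}=k\}}\bigl(\tfrac{n\underline{R}+\overline{B}}{2n\underline{R}+\overline{B}}\bigr)^{k}$, sums over the $2^k$ exchangeable particles of each generation, and is then forced to control the number of jumps of the single Yule lineage: the residual factor $e^{ck/n}$ is handled by splitting at $k=An$ and estimating a Poisson tail via a coupling with a rate-$(2n\overline{R}+\overline{B})$ process. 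Your many-to-one/Feynman--Kac identity collapses all of this into one step: the first moment equals $\EE_{y_0}\bigl[e^{\int_0^T b(s,\xi^s)\,ds}\1_{\{\xi^T\notin K^T\}}\bigr]$ with the weight bounded by $e^{\overline{B}T}$, so no generation-wise summation, no cutoff $A$ and no Poisson tail are needed. This is arguably cleaner, and it automatically resolves the issue the paper flags about [MT2012-2] (the pruning not being independent of $Y^{n,\alpha}$), since after the reduction $D\equiv U\equiv 0$ the rates depend only on the particle's own lineage; for the same reason your worry about the spine rate depending on ``the tree structure around the chosen lineage'' is unfounded at the level of first moments --- that concern only arises for pathwise spine decompositions or higher moments. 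What the many-to-one route costs is that the identity itself, for path-dependent rates and asymmetric offspring (parent keeps lineage $y$, child gets $(y|t|h)$ with $h\sim K^n$), must be proved; the paper's elementary conditional-probability bound on the Yule tree avoids that. Both arguments converge on the same final step --- comparing a single jump process of rate comparable to $pn\overline{R}$ and kernel $\alpha_n$ (up to extra zero-size jumps) with the rate-$n$ process of Hypothesis~\ref{HYP:mut-kernel} by a time change, using that $K$ is taken of the form \eqref{def-compacts-tightness} so that faster jumping only makes it harder to stay in $K$ --- and your treatment there is no less rigorous than the paper's. One small imprecision to fix: Hypothesis~\ref{HYP:mut-kernel} gives tightness of the integrated measures $\int X_0^n(dy)\,\PP^n_{y_0}$, not a bound on $\PP_{y_0}(\xi^{n,T}\notin K^T)$ uniform in the starting point, so your final choice of $K$ should be phrased against the integrated measure (which already carries the total mass of $X_0^n$) rather than as a single-particle probability multiplied by $\sup_n\EE[\langle X_0^n,1\rangle]$.
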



Note that it is enough to show that there exists $K \subset \DD_E$ relatively compact in Proposition~\ref{PRO:MT-Step-6}. The sets $K$ to be constructed in the proof of Proposition~\ref{PRO:MT-Step-6} are of a particular form, namely for $T>0$ we prove existence of $K \in \CD_T$ with $\CD_T$ as defined below. 

Before proceeding, the reader may want to have a look ahead at Definition~\ref{DEF:mod-of-cont} and Theorem~\ref{THM:rel-comp} where the notations $w'(y,\delta,T)$ respectively $w'(A,\delta,T)$ for the modulus of continuity of a path $y$ respectively a set $A$ and a criterion for relative compactness in $\DD_E$ are recalled from \cite{bEK2005}. 


\begin{definition}
\label{def-special-rel-comp}
Let $\CD_T$ be the set of sets $K = K^T \subset \DD_E$ that satisfy: There exist $\Gamma_T \subset E$ compact and $(w'(\delta,T))_{\delta \in (0,1)} \in \RR_+ \cup \{\infty\}$ nondecreasing in $\delta$ with $\lim_{\delta \rightarrow 0} w'(\delta,T) = 0$ such that
\eqn{ 
  K = \{ y \in \DD_E: y=y^T, y(t) \in \Gamma_T \ \forall t \in [0,T], w'(y,\delta,T) \leq w'(\delta,T) \ \forall \delta \in (0,1) \}.
}
\end{definition}


By the criterion for relative compactness in $\DD_E$ (cf. Theorem~\ref{THM:rel-comp}) all sets in $\CD_T$ are relatively compact in $\DD_E$. 


We finish this section with a Lemma that yields a control on the modulus of continuity of the paths of the particles in the population. It is a direct consequence of Proposition~\ref{PRO:MT-extended}.


\begin{lemma}
\label{LEM:app_to_mod_of_cont}
For all $T, \tau, \epsilon>0$ there exists $t_0=t_0(T,\tau,\epsilon)>0$ small enough such that
\eqn{
  \sup_{n \in \NN} \PP\big(\exists t \in [0,T], X_t^n \big( \big\{ y \in \DD_E: w'(y,t_0,t) \geq \tau \big\} \big) > \epsilon \big) \leq \epsilon. 
}
\end{lemma}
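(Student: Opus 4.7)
The plan is to apply Proposition~\ref{PRO:MT-extended} to trap the measures $(X_t^n)_{t \in [0,T]}$ on a relatively compact support, and then exploit the uniform modulus-of-continuity control built into the definition of $\CD_T$ to rule out any path in that support from having $w'(\cdot, t_0, t) \geq \tau$.

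First, I would apply Proposition~\ref{PRO:MT-extended} with parameters $T$ and $\epsilon/2$, producing a compact $K \in \CD_T$ (in the sense of Definition~\ref{def-special-rel-comp}) with associated set $K_T$ and an accompanying control function $w'(\delta, T) \to 0$ as $\delta \downarrow 0$, such that
$$\sup_{n \in \NN} \PP\bigl(\exists t \in [0,T],\ X_t^n(K_T^c) > \epsilon/2\bigr) \leq \epsilon/2.$$

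Second, I would control the modulus of every $z \in K_T$ uniformly in $t \in [0,T]$. Each such $z$ equals either $y^s$ or $y^{s-}$ for some $y \in K$ and $s \in [0,T]$, so $z$ agrees with $y$ on $[0,s)$ (up to at most one endpoint value) and is constant on $[s,\infty)$. Starting from the bound $w'(y, \delta, T) \leq w'(\delta, T)$ coming from $K \in \CD_T$, a short partition argument, merging intervals past $s$ into a single constant block and then restricting the partition to $[0,t]$, yields
$$w'(z, \delta, t) \leq w'(y, \delta, T) \leq w'(\delta, T) \qquad \text{for all } t \in [0,T], \ \delta > 0.$$
Picking $t_0 = t_0(T, \tau, \epsilon) > 0$ small enough so that $w'(t_0, T) < \tau$ then forces $K_T \cap \{y \in \DD_E : w'(y, t_0, t) \geq \tau\} = \emptyset$ for every $t \in [0,T]$. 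Combining this with step one,
$$X_t^n\bigl(\{y \in \DD_E : w'(y, t_0, t) \geq \tau\}\bigr) \leq X_t^n(K_T^c) + 0 \leq \epsilon/2 < \epsilon$$
on the good event, so the desired probability estimate follows with final bound $\epsilon/2 \leq \epsilon$.

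The main obstacle I anticipate is the pathwise comparison $w'(z, \delta, t) \leq w'(y, \delta, T)$ for the stopped variants $z = y^s, y^{s-}$ and a truncated horizon $t \leq T$: one must verify that freezing a path beyond $s$ and restricting the time-window does not increase the Skorohod modulus of continuity. A further minor subtlety appears when $t \leq t_0$, where admissible partitions of $[0,t]$ with inter-point gaps larger than $t_0$ may be degenerate; in that regime $w'(z, t_0, t)$ reduces (by the usual convention) to the total oscillation of $z$ on $[0,t]$, which is itself dominated by $w'(t_0, T)$ inherited from $K$, so the argument goes through without modification.
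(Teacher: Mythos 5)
Your proof is correct and follows the same skeleton as the paper's: invoke Proposition~\ref{PRO:MT-extended}, show that every path in $K_T$ has uniformly small modulus $w'(\cdot,\delta,t)$ for small $\delta$, and choose $t_0$ accordingly. The one place where you diverge is the justification of that uniform modulus control. The paper gets it for free: it cites the fact that $K_T$ is itself compact in $\DD_E$ (via \cite[Lemma 7.6]{bDP1991}, valid for Polish $E$) and then reads off $\lim_{\delta \to 0} w'(K_T,\delta,T)=0$ from the relative-compactness criterion of Theorem~\ref{THM:rel-comp}. You instead carry out a direct partition computation showing that stopping a path at $s$ (or at $s-$) and shrinking the horizon from $T$ to $t$ cannot increase $w'$, so that $w'(z,\delta,t) \leq w'(y,\delta,T)$ for $z \in \{y^s, y^{s-}\}$; this is elementary and self-contained, at the cost of having to handle the $y^{s-}$ case and the degenerate regime $t \leq t_0$ by hand (both of which you do correctly -- note that in the degenerate case the relevant quantity is the oscillation of $z$ on $[0,t_0]$, which is dominated by $w'(z,t_0,T)$ because the first interval of any admissible partition for horizon $T$ contains $[0,t_0]$, consistent with the monotonicity in $T$ already recorded in Definition~\ref{DEF:mod-of-cont}). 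One small caveat: you assume the $K$ produced by Proposition~\ref{PRO:MT-extended} lies in $\CD_T$; the proposition's statement only asserts compactness, although the paper does remark that the constructed sets have this form. This is harmless anyway, since for an arbitrary compact $K$ you can take $w'(\delta,T):=w'(K,\delta,T)$ from Theorem~\ref{THM:rel-comp}(b) and run your argument unchanged. The $\epsilon/2$ splitting is also unnecessary (containment of the bad set in $K_T^c$ gives the bound with $\epsilon$ directly), but costs nothing.
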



\begin{proof}
By \eqref{statement-MT-i}, for all $T, \epsilon>0$ there exists $K \subset \DD_E$ compact such that  
\eqn{
  \sup_{n \in \NN} \PP( \exists t \in [0,T], X_t^n(K_T^c) > \epsilon) \leq \epsilon. 
}
As remarked in \cite[Sketch of the proof of Lemma 3.5]{MT2012-2}, $K_T$ is compact in $\DD_E$ (the reference \cite[Lemma 7.6]{bDP1991} holds for general Polish spaces $E$ as well). Recall Theorem~\ref{THM:rel-comp} to see that as a result of the compactness of $K_T$,
\eqn{
  \lim_{\delta \rightarrow 0} w'(K_T,\delta,T) = 0. 
}
Choose $t_0$ such that $w'(K_T,t_0,T) < \tau$ to conclude the claim. 
\end{proof}


\begin{remark}[Application to evolving genealogies on marked metric measure spaces] 
In \cite{KW2014}, the compact containment result of Theorem~\ref{THM:MT-extended} as well as the control on the modulus of continuity as stated in Lemma~\ref{LEM:app_to_mod_of_cont} are applied in the context of evolving genealogies, modeled by means of marked metric measure spaces (mmm-spaces). Establishing relative compactness here requires, for example, a control on the number of balls of (genetic) radius $\epsilon$ necessary to cover the population. For an introduction to mmm-spaces the interested reader is referred to \cite{DGP2011}, for relative compactness see \cite[Proposition 7.1]{GPW2009} in the un-marked setup respectively \cite[Theorem 3 and Remark 2.5]{DGP2011} in the marked one. 

In \cite[Theorem 2]{GPW2013}, convergence of tree-valued Moran to Fleming-Viot dynamics is proven. Exponential rates are used to model the dynamics in the approximating population models. \cite{GPW2013} work in an ultra-metric setup where the genetic distance between two individuals alive at time $t$ equals twice the time to their most recent ancestor (cf. \cite[(2.20)]{GPW2013}). Hence, to obtain an $\epsilon$-coverage it remains to derive a bound on the number of most recent ancestors (mrca) at time $t-\epsilon$. In \cite{KW2014}, the metric under consideration is genetic distance instead: in the $n^{\mbox{th}}$-approximating population genetic distance is increased by $1/n$ at each birth with mutation. Hence, genetic distance of two individuals is counted in terms of births with mutation backwards in time to the mrca. In this non-ultrametric setup, the control over the whole path as provided by historical particle systems is particularly suitable. By interpreting genetic age of a particle as a trait, the control on the modulus of continuity of the historical path immediately translates into a control on genetic distance backwards in time.
\end{remark}


\section{Proof of Proposition~\ref{PRO:MT-extended}}
\label{SECTION:proof-main-prop}


\begin{proof}[Proof of Proposition~\ref{PRO:MT-extended}] 

For $T, \epsilon>0$ and $K \subset \DD_E$ compact let
\eqn{
  S_\epsilon^n := \inf\{ t \in \RR_+ | X_t^n(K_T^c) > \epsilon \} 
}
be the stopping time introduced in \cite[(3.18)]{MT2012-2} and rewrite 
\eqn{
  \PP( \exists t \in [0,T], X_t^n(K_T^c) > \epsilon) = \PP( S_\epsilon^n < T). 
}
Denote by $K^t := \{ y^t | y \in K \} \subset \DD_E$ the set of the paths of $K$ stopped at time $t$. To bound $\PP( S_\epsilon^n < T)$ by $\epsilon$, uniformly in $n \in \NN$, we have to control $X_t^n(K_T^c)$, that is the mass of the population outside of $K_T$, uniformly over the whole time-interval $t \in [0,T]$. The first step consists in introducing a more tractable quantity, namely instead of $K_T$ we follow \cite{MT2012-2} and focus on $K^T \subset K_T$ (note that if a path leaves $K_T$ it leaves $K^T$ as well) and decompose $\{ S_\epsilon^n < T \}$ into disjoint sets according to the behaviour of the population at the fixed final time $t=T$. We get
\eqn{
  \{ S_\epsilon^n < T \} \subset \big\{ X_T^n \big( (K^T)^c \big) > \frac{\epsilon}{2} \big\} \cup \big\{ S_\epsilon^n < T, X_T^n \big( (K^T)^c \big) \leq \frac{\epsilon}{2} \big\}. 
}

The probability of the first event can be bounded using Markov's inequality. The ensuing expectation $\EE[ X_T^n((K^T)^c) ]$ (at fixed time $T$) can be made arbitrarily small by choosing $K$ big enough as we will see later.

The bound on the second probability is the more involved. Reason as in \cite[Step 2]{MT2012-2} to see that to prove \eqref{statement-MT-i} it suffices to show that there exist $\eta \in (0,1), n_0 \in \NN$ both independent of $K \subset \DD_E$ such that for all $n \geq n_0$,
\eqn{
\lbeq{eta-claim}
  \PP\big( S_\epsilon^n < T, X_T^n((K^T)^c) \leq \frac{\epsilon}{2} \big) \leq \PP(S_\epsilon^n < T)(1-\eta) 
}
(cf. \cite[(3.21)]{MT2012-2} respectively Lemma~\ref{LEM:steps-3-5} below) and that one can choose $K \subset \DD_E$ compact big enough such that
\eqn{
\lbeq{exp-claim}
  \EE[ X_T^n((K^T)^c) ] < \frac{\epsilon^2 \eta}{2} 
}
(cf. \cite[(3.23)]{MT2012-2} respectively Proposition~\ref{PRO:MT-Step-6}) hold.

\textit{Outline of the remainder of the proof of Proposition~\ref{PRO:MT-extended}.} 
\textit{Steps 2--5} of the proof of \cite[Proposition 3.4]{MT2012-2} establish the claim we are interested in, that is the extension of the statement of \cite[Lemma 3.5(i)]{MT2012-2} (compare to Proposition~\ref{PRO:MT-extended}). We already recalled \textit{Step 2} above, leading up to inequalities \eqref{eta-claim}--\eqref{exp-claim} that remain to be shown. The claim of validity of the first inequality is formulated in Lemma~\ref{LEM:steps-3-5} below. The proof is an adaptation of \textit{Steps 3--5}. The change to the remaining \textit{Step 6}, that is the proof of \eqref{exp-claim} respectively Proposition~\ref{PRO:MT-Step-6} is the most involved due to allowing for a more general mutation kernel. The proof is therefore postponed to Section~\ref{SECTION:proof-step-6} below.


\begin{remark}
\textit{Steps 3-5} of the proof of \cite[Proposition 3.4]{MT2012-2} contain a gap. The definition of $\eta$ is circular if one follows the reasoning in \cite[(3.26)--(3.42)]{MT2012-2} carefully.
In the alternative proof below we follow the ideas of \cite{MT2012-2} but avoid this recursive argument. 
As an additional result, the stronger assumption on the interaction kernel in \cite{MT2012-2}, namely $\underline{U}>0$ can be dropped as this is the only instance where it is used in \cite{MT2012-2}.
\end{remark}


\begin{lemma}
\label{LEM:steps-3-5}
For $T, \epsilon>0$ and $K \subset \DD_E$ compact, there exist $\eta \in (0,1), n_0 \in \NN$ both independent of $K \subset \DD_E$ such that for all $n \geq n_0$,
\eqn{ 
  \PP\big( S_\epsilon^n < T, X_T^n((K^T)^c) \leq \frac{\epsilon}{2} \big) \leq \PP(S_\epsilon^T < T)(1-\eta). 
}
\end{lemma}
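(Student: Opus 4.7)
The plan is to follow the outline of Steps 3--5 of \cite[Proposition 3.4]{MT2012-2} but replace the use of $\underline{U} > 0$ (and the associated circular definition of $\eta$) with a direct Paley--Zygmund estimate on a branching subpopulation, thereby avoiding any recursive argument.

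First I would invoke the ``dominating'' coupling of Remark~\ref{rmk-assume-death-zero} to reduce to the case $D \equiv 0$, $U \equiv 0$. The key geometric observation, already used in the proof of Proposition~\ref{PRO:MT-extended}, is that a particle whose lineage $y$ satisfies $y^{S_\epsilon^n} \in K_T^c$ has, together with \emph{all} its genealogical descendants at later times, a lineage in $(K^T)^c$: indeed, if some descendant's lineage $z$ satisfied $z^T = \tilde y^T$ for some $\tilde y \in K$, restricting to $[0, S_\epsilon^n]$ would give $y^{S_\epsilon^n} \in K_T$, a contradiction. Let $Z_t$, $t \in [S_\epsilon^n, T]$, denote the rescaled mass of all genealogical descendants of those particles present in $K_T^c$ at $S_\epsilon^n$. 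Then on $\{S_\epsilon^n < T\}$ one has $Z_{S_\epsilon^n} = X_{S_\epsilon^n}^n(K_T^c) > \epsilon$ and $Z_T \leq X_T^n((K^T)^c)$, and, since $D \equiv U \equiv 0$, conditional on $\CF_{S_\epsilon^n}$ the process $Z$ is an autonomous position-dependent continuous-time branching process in which each particle reproduces at rate $nr(t,y) + b(t,y)$ and dies at rate $nr(t,y)$ independently of the complementary sub-population, with per-particle net drift $b(t,y) \in [0, \overline{B}]$.

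It therefore suffices to show $\PP(Z_T > \epsilon/2 \mid \CF_{S_\epsilon^n}) \geq \eta$ on $\{S_\epsilon^n < T\}$ for some $\eta \in (0,1)$ independent of $K$ and $n$. A Doob--Meyer decomposition gives $Z_t = Z_{S_\epsilon^n} + A_t + M_t$ with nondecreasing compensator $A_t \leq \overline{B} \int_{S_\epsilon^n}^t Z_s\, ds$ and martingale $M_t$ satisfying $\langle M \rangle_t \leq (2\overline{R} + \overline{B}/n) \int_{S_\epsilon^n}^t Z_s\, ds$. Hence $\EE[Z_T \mid \CF_{S_\epsilon^n}] \geq Z_{S_\epsilon^n}$, and by It\^o's formula applied to $Z_t^2$ together with Gronwall's lemma, $\EE[Z_T^2 \mid \CF_{S_\epsilon^n}] \leq C_1 Z_{S_\epsilon^n}^2 + C_2 Z_{S_\epsilon^n}$ where $C_1 = e^{2\overline{B} T}$ and $C_2$ depends only on $T, \overline{R}, \overline{B}$. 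Applying Paley--Zygmund at threshold $2\epsilon/3$ (strictly below both $Z_{S_\epsilon^n}$ and $\EE[Z_T \mid \CF_{S_\epsilon^n}]$) gives
\[ \PP(Z_T > \epsilon/2 \mid \CF_{S_\epsilon^n}) \;\geq\; \PP(Z_T \geq 2\epsilon/3 \mid \CF_{S_\epsilon^n}) \;\geq\; \frac{(1/3)^2 (\EE[Z_T\mid\CF_{S_\epsilon^n}])^2}{\EE[Z_T^2 \mid\CF_{S_\epsilon^n}]} \;\geq\; \frac{1/9}{\,C_1 + C_2/\epsilon\,} \;=:\; \eta, \]
and combined with $\{X_T^n((K^T)^c) \leq \epsilon/2\} \subseteq \{Z_T \leq \epsilon/2\}$, integrating the conditional bound over $\{S_\epsilon^n < T\}$ delivers the lemma (in fact with any $n_0$, including $n_0 = 1$).

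The main obstacle I anticipate is producing the moment bound $\EE[Z_T^2 \mid \CF_{S_\epsilon^n}] \leq C_1 Z_{S_\epsilon^n}^2 + C_2 Z_{S_\epsilon^n}$ in the \emph{precise} quadratic-plus-linear form: the linear term (from the jump martingale's predictable quadratic variation) and the quadratic term (from the positive compensator $A$) must combine so that $\EE[Z_T^2]/(\EE[Z_T])^2 \leq C_1 + C_2/Z_{S_\epsilon^n}$ remains bounded uniformly for $Z_{S_\epsilon^n} > \epsilon$; otherwise Paley--Zygmund fails to deliver a $K$- and $Z_{S_\epsilon^n}$-uniform $\eta$. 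A secondary point is the rigorous identification of $Z$ as an autonomous sub-process of the measure-valued process $X^n$, which follows, once $D \equiv U \equiv 0$, from the purely branching (non-interacting) structure via a standard martingale-problem argument as in \cite{FM2004}.
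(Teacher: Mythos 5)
Your proof is correct, and for the decisive estimate it takes a genuinely different route from the paper. Both arguments share the same skeleton: work under the reduction $D \equiv 0$, $U \equiv 0$ of Remark~\ref{rmk-assume-death-zero}, note that every time-$T$ descendant of a particle whose stopped path lies outside $K^{S_\epsilon^n}$ contributes to $X_T^n((K^T)^c)$, and reduce the lemma to showing that the total mass of this sub-population at time $T$ exceeds $\epsilon/2$ with conditional probability at least some $\eta$ independent of $K$, $n$ and $\CF_{S_\epsilon^n}$. Where you diverge is in how that survival probability is produced. The paper couples the sub-population with a \emph{minorizing} branching process with birth rate $nr(t,y)$ and death rate $nr(t,y)+D_0$ for a small constant $D_0<4\underline{R}/(T\overline{R})$, passes to its diffusive limit $\tilde\CZ$, and runs an exponential-martingale/Novikov--Girsanov argument to get $\PP(\inf_{u\leq T}\tilde\CZ_u\geq 3\epsilon/4)>0$; transferring this back to the particle systems is what forces the restriction $n\geq n_0$. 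You instead keep the (super)critical particle system itself (birth rate $nr+b$, death rate $nr$), compute conditional first and second moments of $Z_T$ via Doob--Meyer and Gronwall, and apply Paley--Zygmund: since $\EE[Z_T\mid\CF_{S_\epsilon^n}]\geq Z_{S_\epsilon^n}>\epsilon$ and $\EE[Z_T^2\mid\CF_{S_\epsilon^n}]\leq C_1Z_{S_\epsilon^n}^2+C_2Z_{S_\epsilon^n}$, the ratio $(\EE[Z_T\mid\CF_{S_\epsilon^n}])^2/\EE[Z_T^2\mid\CF_{S_\epsilon^n}]$ is bounded below by $(C_1+C_2/\epsilon)^{-1}$ uniformly over $Z_{S_\epsilon^n}>\epsilon$, which is exactly the $K$-uniformity the lemma needs; the moment bound you flag as the main obstacle does come out in the required quadratic-plus-linear form (linear from $\langle M\rangle_t\leq(2\overline{R}+\overline{B}/n)\int Z_s\,ds$, quadratic from the drift $b\leq\overline{B}$), so there is no gap there. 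The trade-off: the paper's route controls $\inf_{s\in[S_\epsilon^n,T]}\langle Z_s^n,1\rangle$ --- more than the lemma asks --- at the cost of the diffusion-limit machinery and an unspecified $n_0$; yours is more elementary, gives an explicit $\eta$ depending only on $\epsilon,T,\overline{R},\overline{B}$, and holds for every $n$ (so $n_0=1$), but controls only the terminal mass, which is all that is required here.
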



\begin{proof}
Following the abstract reasoning of \textit{Step 3} of the proof of \cite[Proposition 3.4]{MT2012-2} up to and including equation \cite[(3.25)]{MT2012-2}, we conclude that it is enough to show that there exists $\eta \in (0,1), n_0 \in \NN$ large enough such that for $n \geq n_0$
\eqn{
\lbeq{to-show-eta}
  \PP\big( X_{S_\epsilon^n + (T-S_\epsilon^n)}^n(\{ y^{S_\epsilon^n} \not\in K^{S_\epsilon^n} \} ) \leq \frac{\epsilon}{2} \big| \CF_{S_\epsilon^n} \big) \leq 1 - \eta.
}

Now modify the reasoning in the remainder of \textit{Step 3} from \cite[(3.26)]{MT2012-2} onwards as follows: Couple the historical process $X^n$ to a minorizing process $(Z_t^n(dy))_{t \in \RR_+}$ with initial condition (cf. \cite[(3.27)]{MT2012-2})
\eqn{ 
  Z_{S_\epsilon^n}^n(dy) = \1_{y^{S_\epsilon^n} \not\in K^{S_\epsilon^n}} X^n_{S_\epsilon^n}(dy). 
}
Choose the birth rate $n r(t,y)$ as in \cite{MT2012-2} but change the death rate to $n r(t,y)+D_0$ with $D_0=D_0(T)>0$ a small enough constant to be chosen later on.
We now obtain instead of \cite[(3.28)]{MT2012-2} as an upper bound to the left hand side in \eqref{to-show-eta},
\eqn{
  1 - \PP\big( \inf_{s \in [S_\epsilon^n,T]} \langle Z_s^n,1 \rangle > \frac{\epsilon}{2} \big| \CF_{S_\epsilon^n} \big). 
}
%
It now remains to show that there exist $\eta \in (0,1), n_0 \in \NN$ such that 
\eqn{ 
\lbeq{ref-for-eta}
  \inf_{n \geq n_0} \PP\big( \inf_{s \in [S_\epsilon^n,T]} \langle Z_s^n,1 \rangle > \frac{\epsilon}{2} \big| \CF_{S_\epsilon^n} \big) \geq \eta. 
}
Follow the reasoning of \textit{Step 4} in \cite{MT2012-2}, the only difference being that we replace $\overline{D}+\overline{U} N$ by the constant $D_0$ and $2 \eta$ by $\eta$ throughout. Note in particular, that $\eta$ is finally defined as in \cite[(3.42)]{MT2012-2} but with the factor of $2$ replaced by $1$ on the right hand side. This leads directly up to \textit{Step 5}, where it remains to show that
\eqn{ 
  \PP_{z,r} \Big( \inf_{u \in [0,T]} \tilde{\CZ}_u \geq \frac{3\epsilon}{4} \Big) > 0 
}
for $(z,r) \in \DD_\RR \times [0,T]$ arbitrarily fixed and where $\tilde{\CZ}_\cdot$ is the diffusive limit of $\langle Z_{S_\epsilon^n+\cdot}^n , 1 \rangle$ as introduced above \cite[(3.34)]{MT2012-2} in \textit{Step 4}. Also note the characterization of $\tilde{\CZ}$ in \cite[(3.37)]{MT2012-2}. Following the reasoning of \cite[Step 5]{MT2012-2}, where we replace once more $\overline{D}+\overline{U} N$ by $D_0$, we obtain instead of the equation in between \cite[(3.44)--(3.45)]{MT2012-2},
\eqn{ 
  e^{\lambda \tilde{\CZ}_{t \wedge \zeta_M}} = e^{\lambda \epsilon} + \int_0^{t \wedge \zeta_M} \big( \tfrac{\lambda^2}{2} \rho(s) - \lambda D_0 \big) \tilde{\CZ}_s e^{\lambda \tilde{\CZ}_s} ds + \int_0^{t \wedge \zeta_M} \lambda \sqrt{ \rho(s) \tilde{\CZ}_s } e^{\lambda \tilde{\CZ}_s} dB_s 
}
for $\lambda>0$ with $\zeta_M = \inf\{ t \geq 0, \tilde{\CZ}_t \geq M \}$, $M>0$. Take expectations and choose $\lambda < D_0/\overline{R}$ (recall from \eqref{ass-r} that $0<\underline{R}<\overline{R}<\infty$ and from above \cite[(3.36)]{MT2012-2} that $2 \underline{R} \leq \rho(s) \leq 2 \overline{R}$)
to conclude analogously to \cite{MT2012-2} that $\EE( \mathrm{exp}(\lambda \tilde{Z}_{t \wedge \zeta_M}) ) \leq \mathrm{exp}(\lambda \epsilon)$. By choosing
\eqn{
\lbeq{ass-D-small}
  D_0 < \frac{4 \underline{R}}{T \overline{R}} 
}
we conclude as in \cite[(3.45)]{MT2012-2},
\eqn{ 
  \EE \Big( e^{\int_0^T \frac{D_0^2}{2 \rho(s)} \tilde{\CZ}_s ds} \Big) \leq \frac{1}{T} \int_0^T \EE \Big( e^{\frac{D_0^2 T}{4 \underline{R}} \tilde{\CZ}_s} \Big) ds \leq e^{\frac{D_0^2 T \epsilon}{4 \underline{R}}} < \infty. 
}
Now reason as in the remainder of \textit{Step 5} to obtain $\eta>0$. 
\end{proof}


\textit{Conclusion of the proof of Proposition~\ref{PRO:MT-extended}.}
Taking Lemma~\ref{LEM:steps-3-5} and Proposition~\ref{PRO:MT-Step-6} together yields the claim. 
\end{proof}


\section{Proof of Proposition~\ref{PRO:MT-Step-6}}
\label{SECTION:proof-step-6}


\begin{proof}[Proof of  Proposition~\ref{PRO:MT-Step-6}] 

Coupling with a dominating historical particle system allows us to assume $b^n(t,y) = n r(t,y) + \overline{B}$ as birth rate (cf. \eqref{ass-r}) and $d^n(t,y) := n r(t,y)$ as death rate (cf. \eqref{death-rate}) at time $t$. 
Next construct the tree underlying $X^n$ analogously to \cite[Step 6]{MT2012-2} by pruning a Yule tree with traits in $E$. 

A particle of lineage $y$ at time $t$ gives two offspring (one is the parent, one the child) at rate $b^n(t,y)+d^n(t,y)$. One has lineage $y$ and the other has lineage $(y | t | h)$ (recall \eqref{concatenated-path}), where $h$ is distributed following 
\eqn{
\lbeq{jump-sizes-child}
  K^n(x,dh) := p \alpha_n(x,dh) + (1-p) \delta_x(dh)
}
with $x=y_{t-}$ (compare \cite[(2.5)]{MT2012-2}). Using Harris-Ulam-Neveu's notation to label the particles (see e.g. \cite{bD1993}), we denote by $Y^{n,\alpha}$ for $\alpha \in \CI = \cup_{m=0}^{+\infty} \{0,1\}^{m+1}$ the lineage of the particle with label $\alpha$.


\begin{remark}[Clarification of notation] 
\label{RMK:lineage-nota-probl}
The lineage of the particle with label $\alpha$ does only record the lineage of the particle until the random time $S_{|\alpha|+1}$ (cf. \eqref{lineage}). To regard particles as individuals alive indefinitely, identify the lineage of the particle with label $\alpha$ with the lineage of the particle $(\alpha,\beta)$ with $\beta=(0,\ldots,0)$ and $|\beta| \rightarrow \infty$.
\end{remark}


Particles descending from the same individual at time $0$ are exchangeable and the common distribution of the process $Y^{n,\alpha}$ (in the new notation) is the one of a pure jump process on $E$, where the jumps occur at rate $b^n(t,y)+d^n(t,y) = 2nr(t,y) + \overline{B}$ and where the new traits are distributed according to the probability measure 
\eqn{
\lbeq{jump-sizes-kernel}
  \tfrac{1}{2} \delta_{y_{t-}}(dh) + \tfrac{1}{2} K^n(y_{t-},dh)
}
(with probability $1/2$ we pick the parent with probability $1/2$ the child at the time of birth of an offspring). We denote by $\PP_x^n$ its distribution starting from $x \in E$.

At each node of the Yule tree, an independent pruning is made: the offspring are kept with probability $p(n) := b^n(t,y) / (b^n(t,y)+d^n(t,y))$ and are erased otherwise.
%

Following \cite{MT2012-2}, let us denote by $V_t^n$ the set of individuals alive at time $t$ and write $\alpha \succ i$ to say that the individual $\alpha$ is a descendant of the individual $i$. Recall that $N_0^n$ is the number of individuals present at time $0$. Let
\eqn{
  \Sigma_i :=  \sum_{\alpha \succ i} \EE \big[ \PP \big( \alpha \in V_T^n \big| Y^{n,\alpha} \big) \1_{\{ (Y^{n,\alpha})^T \not\in K^T \}} \big]
}
so that for $K \subset \DD_E$ relatively compact,
\eqn{
\lbeq{control-time-dist-MT}
  \EE[ X_T^n((K^T)^c)] 
  = \EE \Big[ \frac{1}{n} \sum_{i=1}^{N_0^n} \Sigma_i \Big].
}
%
 
\begin{remark}
In \textit{Step 6} of the proof of \cite[Proposition 3.4]{MT2012-2}, an error occurs when rewriting the expectation corresponding to \eqref{control-time-dist-MT} above. The pruning of the Yule tree is not independent of the process $Y^{n,\alpha}$ in so far as the pruning parameter depends on the path of the particle. In what follows a new proof is given that further allows to handle Polish trait spaces and more general mutation kernels.
\end{remark}


Next, recall from \cite{bEK2005} a criterion for relative compactness in $\DD_E$ and the definition of modulus of continuity used therein. 
%

\begin{definition}[modulus of continuity, {\cite[III.6.(6.2)]{bEK2005}}]
\label{DEF:mod-of-cont}
Let $(E,r)$ denote a metric space. For $x \in \DD_E, \delta>0$ and $T>0$, define
\eqn{
\lbeq{def-mod-cont}
  w'(x,\delta,T) = \inf_{\{t_i\}} \max_i \sup_{s,t \in [t_{i-1},t_i)} r(x(s),x(t)),
}
where $\{t_i\}$ ranges over all partitions of the form $0=t_0<t_1< \cdots <t_{n-1}<T \leq t_n$ with $\min_{1 \leq i \leq n} (t_i-t_{i-1}) > \delta$ and $n \geq 1$. Note that $w'(x,\delta,T)$ is nondecreasing in $\delta$ and in $T$.
\end{definition}


\begin{theorem}[criterion for relative compactness in $\DD_E$, {\cite[III.6.Theorem 6.3 and Remark 6.4]{bEK2005}}]
\label{THM:rel-comp}
Let $(E,r)$ be complete. Then $A \subset \DD_E$ is relatively compact if and only if the following two conditions hold:
\begin{enumerate}
\item[(a)]
For each $T>0$ there exist a compact set $\Gamma_T \subset E$ such that $x(t) \in \Gamma_T$ for $0 \leq t \leq T$ and all $x \in A$.
\item[(b)]
For each $T>0$,
\eqn{
\lbeq{rc-cond-mod-cont}
  \lim_{\delta \rightarrow 0} w'(A,\delta,T) := \lim_{\delta \rightarrow 0} \sup_{x \in A} w'(x,\delta,T) = 0.
}
\end{enumerate}
\end{theorem}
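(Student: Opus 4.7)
Since the statement is the classical Ethier--Kurtz relative compactness criterion in $\DD_E$, the plan is to reproduce the standard two-directional argument, paying attention to the fact that relative compactness in $\DD_E$ is, under completeness of $(E,r)$, equivalent to total boundedness in any metric generating the Skorohod topology.

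\emph{Necessity of (a).} Assume $A$ is relatively compact and fix $T>0$. Using the time-change description of Skorohod convergence ($x_n \to x$ iff there exist continuous strictly increasing $\lambda_n:\RR_+ \to \RR_+$ with $\lambda_n(0)=0$, $\lambda_n \to \mathrm{id}$ uniformly on compacts, and $x_n \circ \lambda_n \to x$ uniformly on compacts), I would first show that $\{x(t) : x \in \overline{A},\, t \in [0,T]\}$ is totally bounded in $(E,r)$. Given any sequence in this set, write it as $x_n(t_n)$ and pass to a Skorohod-convergent subsequence $x_n \to x$; by a diagonal extraction with the time-changes $\lambda_n$ one gets along a further subsequence a limit point in $\{x(s-), x(s) : s \in [0,T]\}$, which is a compact subset of $E$ because $x$ has only finitely many jumps of size $>\eta$ on $[0,T]$ for each $\eta>0$. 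Taking closure furnishes the required $\Gamma_T$.

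\emph{Necessity of (b).} The key ingredient is (i) that $\lim_{\delta \to 0} w'(x,\delta,T) = 0$ for every individual $x \in \DD_E$, which is immediate from the right-continuity and existence of left limits (one partitions $[0,T]$ at the jumps larger than $\eta$), and (ii) that for fixed $\delta>0$ and $T>0$ the map $x \mapsto w'(x,\delta,T)$ is upper semicontinuous on $\DD_E$: given $x_n \to x$ with time-changes $\lambda_n$ as above, any admissible partition $\{t_i\}$ for $x$ pulls back via $\lambda_n^{-1}$ to an admissible partition for $x_n$ for $n$ large, with oscillations converging to those of $x$. Applying a Dini-type argument to the decreasing net $\delta \mapsto w'(\cdot,\delta,T)$ on the compact set $\overline{A}$ gives $\sup_{x \in \overline{A}} w'(x,\delta,T) \to 0$ as $\delta \to 0$, which is \eqref{rc-cond-mod-cont}.

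\emph{Sufficiency.} Assume (a) and (b) and fix $T>0$ and $\epsilon>0$. Using (b), pick $\delta>0$ with $w'(A,\delta,T)<\epsilon$; by (a) cover $\Gamma_T$ by finitely many $r$-balls $B_1,\ldots,B_N$ of radius $\epsilon$. For each $x \in A$ select an admissible partition $\{t_i\}$ with $\min(t_i - t_{i-1})>\delta$ and oscillation $<\epsilon$ on each $[t_{i-1},t_i)$. Quantising the partition points to a fixed finite grid of mesh $\delta/2$ in $[0,T]$, and picking on each subinterval a representative ball $B_{j(i)}$, yields a finite collection of step functions with values in the centers of the $B_k$'s; I would then show that every $x \in A$ is within Skorohod distance $O(\epsilon)$ of some such step function (using an explicit time-change that maps the quantised partition to the actual $\{t_i\}$). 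Completeness of $(E,r)$ ensures that the Skorohod metric is complete, and total boundedness plus completeness gives relative compactness.

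\emph{Main obstacle.} The delicate point is the sufficiency direction: the interplay between the infimum-over-partitions nature of $w'$ and the time-change definition of the Skorohod metric has to be handled carefully, in particular checking that the time-change sending the quantised grid points to the true partition points is close to the identity (which is exactly why the condition $\min (t_i-t_{i-1})>\delta$ is built into the definition of $w'$). The necessity of (b) is softer but also requires the upper semicontinuity argument, which is not completely trivial because $w'$ is not continuous at functions with jumps clustered near the partition points.
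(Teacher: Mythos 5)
This theorem is quoted verbatim from Ethier--Kurtz (III.6, Theorem~6.3 and Remark~6.4) and the paper supplies no proof of its own, so there is nothing internal to compare against; your sketch reproduces the standard textbook argument (total boundedness of the range via time changes, semicontinuity of $w'$ plus a Dini-type argument for the necessity of (b), an $\epsilon$-net of step functions together with completeness of the Skorohod metric for sufficiency) and is essentially correct. The one point to handle with care is your claim that $x \mapsto w'(x,\delta,T)$ is upper semicontinuous for \emph{fixed} $\delta$: pulling an admissible partition back through a time change $\lambda_n$ can shrink the mesh below $\delta$, so one only obtains $\limsup_n w'(x_n,\delta',T) \leq w'(x,\delta,T)$ for $\delta' < \delta$ --- which, combined with the monotonicity of $w'$ in $\delta$, still suffices for the covering/Dini step, exactly as in the cited source.
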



Recall Definition~\ref{def-special-rel-comp} and the comment following it. In what follows it is therefore sufficient to prove that for $T>0$ fixed there exist a compact set $\Gamma_T \subset E$ and $(w'(\delta,T))_{\delta \in (0,1)} \in \RR_+ \cup \{\infty\}$ nondecreasing in $\delta$ with $\lim_{\delta \rightarrow 0} w'(\delta,T) = 0$ such that 
\eqn{
\lbeq{def-compacts-tightness}
  K := \{ y \in \DD_E: y=y^T, y(t) \in \Gamma_T \ \forall t \in [0,T], w'(y,\delta,T) \leq w'(\delta,T) \ \forall \delta \in (0,1) \}
}
satisfies \eqref{pro-MT-Step-6}.


\textit{Continuation of the proof of Proposition~\ref{PRO:MT-Step-6}.}
Let $N_T^{n,\alpha}$ denote the number of jumps of the particle with label $\alpha$ up to time $T$ (recall Remark~\ref{RMK:lineage-nota-probl}). Then 
\eqn{
  \PP \big( \alpha \in V_T^n \big| Y^{n,\alpha} \big)
  \leq \1_{\{ N_T^{n,\alpha} = |\alpha| \}} \Big( \frac{n \underline{R} + \overline{B}}{2n \underline{R} + \overline{B}} \Big)^{|\alpha|}.
}
Therefore, summing over $|\alpha|$, we get for some $c>0$,
\eqn{
  \Sigma_i 
  \leq \sum_{k=0}^\infty \Big( 1+\frac{c}{n} \Big)^k 2^{-k} \sum_{\alpha \succ i, |\alpha|=k} \PP\big( N_T^{n,\alpha} = k , (Y^{n,\alpha})^T \not\in K^T \big).
}

Let $Y^n$ be a process that starts in $X_0^i \in E$, the initial position of individual $i \in \{1,\ldots,N_0^n\}$ and is distributed according to $\PP^n_{X_0^i}$. Denote by $N_T(Y^n)$ the number of jumps of $Y^n$ up to time $T$. Then, for any $A>0$,
\eqan{
\lbeq{bound-on-Sigma-i}  
  \Sigma_i
  &\leq \sum_{k=0}^\infty e^{ck/n} \PP\big( N_T(Y^n) = k , (Y^n)^T \not\in K^T \big) \\
  &\leq e^{cA} \PP\big( (Y^n)^T \not\in K^T \big) + \EE\big[ e^{cN_T(Y^n)/n} \1_{\{ N_T(Y^n) > An \}} \big]. \nn
}
Let $\overline{Y}^n$ be a coupled jump-process which has the same sequence of jumps as $Y^n$ but jumps at dominating rate $2n\overline{R}+\overline{B}$. Then the coupling can be constructed such that the inter-jump-times of $\overline{Y}^n$ minorize those of $Y^n$. The fact that these times are equal or smaller implies that by definition of $K$, $\PP\big( (Y^n)^T \not\in K^T \big) \leq \PP\big( (\overline{Y}^n)^T \not\in K^T \big)$ and $N_T(Y^n) \leq N_T(\overline{Y}^n)$, the latter being $\mbox{Pois}(\lambda_n)$ with $\lambda_n := T(2n\overline{R}+\overline{B})$. Then there exist constants $C_1, C_2>0$ such that for any $\epsilon_0>0$ we may now choose $A$ large enough so that
\eqan{
  & \EE\big[ e^{cN_T(Y^n)/n} \1_{\{ N_T(Y^n) > An \}} \big]
  \leq \EE\big[ e^{cN_T(\overline{Y}^n)/n} \1_{\{ N_T(\overline{Y}^n) > An \}} \big] \\
  &\leq e^{\lambda_n (e^{c/n}-1)} \PP\big( \mbox{Pois}( \lambda_n e^{c/n} ) \geq An \big)
  \leq C_1 \PP\big( \mbox{Pois}(C_2n) \geq An \big)
  < \epsilon_0. \nn
}
Put this back into \eqref{bound-on-Sigma-i} and \eqref{control-time-dist-MT} to obtain
\eqn{
\lbeq{final-bound}
  \EE[ X_T^n((K^T)^c)] 
  \leq e^{cA} \EE \Big[ \int_{\DD_E} X_0^n(dy) \overline{\PP}^n_{y_0}\big( \{ y: y^T \not\in K^T \} \big) \Big] + \epsilon_0 \EE \Big[ \langle X_0^n , 1 \rangle \Big],
}
where $\overline{\PP}^n_{y_0}$ denotes the distribution of $\overline{Y}^n$ starting in $y_0$. 

Choose $A$ big enough such that the second term in \eqref{final-bound} is $\epsilon/2$ at most, uniformly in $n \in \NN$. Keep $A$ fixed and use \eqref{tightness-initial} and Hypothesis~\ref{HYP:mut-kernel} to get the required bound in Proposition~\ref{PRO:MT-Step-6}. Here we note that the process $Y^n$ of Hypothesis~\ref{HYP:mut-kernel} jumps according to the kernel $\alpha_n(x,dh)$ at rate $n$, whereas the process $\overline{Y}^n$ jumps under $\overline{\PP}^n_{y_0}$ at rate $2n\overline{R}+\overline{B}$ according to the jump kernel in \eqref{jump-sizes-kernel}. The change in the rate amounts to a time change only. Replacing jumps by jumps of size zero increases the chances to stay inside the relatively compact set $K$ (cf. Theorem~\ref{THM:rel-comp}). 
\end{proof}





\providecommand{\bysame}{\leavevmode\hbox to3em{\hrulefill}\thinspace}
\providecommand{\MR}{\relax\ifhmode\unskip\space\fi MR }
\providecommand{\MRhref}[2]{%
  \href{http://www.ams.org/mathscinet-getitem?mr=#1}{#2}
}
\providecommand{\href}[2]{#2}


\ACKNO{Many thanks go to Wolfgang L\"{o}hr for helpful discussions. Further thanks go to Viet Chi Tran for feedback on the underlying article. Finally, the author wishes to thank a referee for a number of suggestions that helped to improve the exposition of this article and streamline proofs. This research was supported by the DFG through the SPP Priority Programme 1590.}


\end{document}